\title{ On the  Topology induced by Probabilistic Modular spaces   \\[0.3cm]}
\author{{ Kamal Fallahi   , \,\,\,Kourosh Nourouzi
 }\\[0.4cm]
{ \em  Department of Mathematics,  K. N. Toosi University of
 Technology,}\\
{\em P.O. Box 16315-1618, Tehran, Iran.}\\}
\newenvironment{proof}{\noindent {\em {Proof}}.}{$\square$
\medskip}
\newtheorem{definition}{Definition}
\newtheorem{theorem}{Theorem}
\newtheorem{remark}{Remark}
\newtheorem{lm}{Lemma}
\begin{document}

\maketitle \begin{abstract} In this  note, we   investigate some topological properties of probabilistic modular spaces.
\end{abstract}

\section{Introduction}

A modular on a real linear space $X$ is a real functional $\rho$
on $X$ satisfying the following conditions:
\begin{description}
\item \ 1. $\rho(x)=0$ iff $x=0$; \item \ 2. $\rho(x)=\rho(-x)$, for all $x\in X$;
\item \ 3. $\rho(\alpha x+\beta y)\leq \rho(x)+\rho(y)$, for all
$x,y\in X$ and $\alpha, \beta \geq 0$, $\alpha +\beta=1$.
\end{description}
The vector subspace $X_\rho = \{x\in X: \rho(ax)\rightarrow 0
\,\,\,\rm{as} \,\,\,a\rightarrow0\}$ of $X$ is called a modular
space (see e.g.,   \cite{koz, mus, k3}).

 According to the  notions  modular and probabilistic norm
(see e.g.,\cite{agh, eng, meng}), the concept of a  probabilistic modular  first initiated in \cite{k4} and then followed in \cite{k1, k2, k5}.

 In this  paper,   some topological properties of probabilistic modular spaces are investigated.

 We will denote the set of all non-decreasing functions $f: \mathbb{R}
 \rightarrow {\mathbb{R}_0}^+$ satisfying
 $\inf_{t\in \mathbb{R}}f(t)=0$, and
$\sup_{t\in \mathbb{R}}f(t)=1$ by $\Delta$.

\begin{definition}{\rm A pair $(X, \mu)$ is called
a  probabilistic modular space (briefly, $\mathcal{PM}$-space)  if
$X$ is a real vector space, $\mu$ is a mapping from $X$ into
$\Delta$ (for $x\in X$, the function $\mu(x)$ is denoted by
$\mu_x$) satisfying the following
conditions:
\begin{description}
\item \ 1.$\mu_x(0)=0$; \item \ 2. $\mu_x(t)=1$ for all $t>0$ iff
$x=0;$ \item \ 3. $\mu_{-x}(t)=\mu_x(t)$; \item \ 4. $\mu_{\alpha
x+\beta y}(s+t)\geq \mu_x(s)\wedge \mu_y(t)$ for all $x, y \in X$,
and $\alpha, \beta, s,t \in {\mathbb{R}_0}^+$, $\alpha + \beta
=1.$
\end{description}}
\end{definition}

A $PM$-space $(X, \mu)$ is said to satisfy $\Delta_2$-condition if
there is a constant $c> 0$ such that $\mu_{2x}(t) \geq \mu_x
(\frac{t}{c})$ for any $x \in X$ and $t>0.$
\begin{definition} For a positive number $\beta\leq 1$, the modular $\mu$ of  the
  $PM$-space $(X,\mu)$  is  said to
be $\beta$-homogeneous if it satisfies the equality
$$\mu_{\alpha x} (t) = \mu_x(\frac{t}{|\alpha|^\beta})$$ for every
$x\in X$, $t>0$ and $\alpha \in\mathbb{R}$.
\end{definition}

%\begin{ex}\label{ex}
%{\rm Suppose that $X$ is a real vector space and $\mu$ is a
%modular on $X$. Define \begin{center} $\mu_x(t)=\left \{
% \begin{array}{cc}\!\!\!\!\!\!\!\!\!\!\!\!0 & x\leq 0 \\
%\frac {t}{t+\rho(x)} & x> 0 \\
%\end{array} \right.$
%\end{center}
%Then $(X,\mu)$ is a $\mathcal{P}$-modular space.}
%\end{ex}
%\begin{ex}{\rm Suppose that $X$ is a real vector space and $\rho$ is a
%modular on $X$. Define \begin{center} $\mu_x(t)=\left \{
%\begin{array}{cc} 0 &
% t\leq \rho(x) \\
%1 & t> \rho(x) \\
%\end{array} \right.$
%\end{center}
%Then $(X,\mu)$ is a $\mathcal{P}$-modular space.}
%\end{ex}
%
%\begin{ex}\label{nonbeta} {\rm Define $\rho: \mathbb{R} \rightarrow
% \mathbb{R}$ by \begin{center} $\rho(x)=\left \{ \begin{array}{cc} 0 & x=0
% \\
%1 & x\neq 0 \\
%\end{array} \right.$
%\end{center}
%Then it is evident that $(\mathbb{R},\rho)$ is a modular space. If
%we define $\mu:\mathbb{R}\times \mathbb{R}\rightarrow [0,1]$ by
%$$\mu_x(t) =\frac{t}{t+\rho(x)}.$$ Then $(\mathbb{R},\mu)$ is a non
% $\beta$-homogeneous $\mathcal{P}$-modular
%space, for every $\beta$. Since $\frac{t}{t+1}\neq
%\frac{t}{t+2^\beta}$, for all $\beta \in (0,1]$.}
%\end{ex}
\begin{definition} {\rm Let $(X,\mu)$ be a $\mathcal{PM}$-space.
For $x\in X$, $t>0$, and $0<\alpha<1$, the $\mu$-ball centered at
$x$ with radius $r$ is defined by $$ B(x,r,t)=\{y\in X:
\mu_{x-y}(t)>1-\alpha \}.$$}
\end{definition}

\section{Results}
\begin{lm} \label{1} Let $(X,\mu)$ be a $\mathcal{PM}$-space. If  $y\in B(x,\alpha, t)$, then
%$\alpha\in(0,1)$, $t>0$ and $x, y\in X$, $\mu_{x-y}(t)>1-\alpha$, then
there exists $t^\ast \in (0,t)$ such that
$\mu_{x-y}(t^\ast)>1-\alpha$.
\end{lm}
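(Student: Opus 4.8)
The plan is to strip the statement down to a one-variable fact about the distribution functions in $\Delta$. By the definition of the $\mu$-ball, the hypothesis $y\in B(x,\alpha,t)$ says precisely that $\mu_{x-y}(t)>1-\alpha$. Writing $f:=\mu_{x-y}\in\Delta$, the whole claim becomes: for a non-decreasing $f\in\Delta$ with $f(t)>1-\alpha$, there is $t^\ast\in(0,t)$ with $f(t^\ast)>1-\alpha$. Before invoking anything deeper I would record that $f(0)=\mu_{x-y}(0)=0$ by condition (1) of the definition of a $\mathcal{PM}$-space, so monotonicity forces $f(s)=0$ for all $s\le 0$; hence any point at which $f$ exceeds $1-\alpha>0$ is automatically positive, which is what guarantees the $t^\ast$ we produce lands in $(0,t)$ rather than merely in $(-\infty,t)$.

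The core step uses the regularity of the elements of $\Delta$. Since $f$ is bounded and non-decreasing, the left-hand limit $f(t^-)=\sup_{s<t}f(s)$ exists, and for a left-continuous distribution function $f(t^-)=f(t)$. Thus $\sup_{s<t}f(s)=f(t)>1-\alpha$, so the defining property of the supremum yields some $s_0<t$ with $f(s_0)>1-\alpha$; by the previous paragraph $s_0>0$, and we set $t^\ast=s_0$. Equivalently one can argue with an $\varepsilon$--$\delta$ flavour: choose $\varepsilon>0$ with $f(t)>1-\alpha+\varepsilon$, apply left-continuity at $t$ to get $\delta\in(0,t)$ with $f(s)>f(t)-\varepsilon>1-\alpha$ for all $s\in(t-\delta,t)$, and take $t^\ast=t-\delta/2$. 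Either way, unwinding $f=\mu_{x-y}$ gives $\mu_{x-y}(t^\ast)>1-\alpha$, which is the assertion.

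The one place that needs care — and what I would flag as the real content — is the appeal to left-continuity of $f$ at $t$. With $f$ only assumed non-decreasing one could take $f\equiv 0$ on $(-\infty,t)$ with a jump above $1-\alpha$ occurring exactly at $t$, and then no admissible $t^\ast$ exists; conditions (1)--(4) do not by themselves exclude this (condition (4) only provides lower bounds for $\mu$ at sums of arguments and cannot be used to push the argument $t$ downward). So the proof rests on reading $\Delta$ as consisting of left-continuous distribution functions, the standard convention in the probabilistic metric/normed-space literature on which the paper builds; I would state that hypothesis explicitly, after which the argument above is routine.
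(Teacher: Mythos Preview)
Your argument is correct and follows the same line as the paper: both hinge on the left-continuity of $\mu_{x-y}(\cdot)$ at $t$ to pass from $\mu_{x-y}(t)>1-\alpha$ to a strictly earlier point with the same inequality, the paper packaging this as a one-line contradiction while you argue directly via the left limit/supremum. Your explicit caveat that left-continuity is not literally among the listed axioms for $\Delta$ is well placed---the paper's own proof simply invokes ``the left continuity of $\mu_x(\cdot)$ at $t$'' without having recorded it in the definition.
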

\begin{proof} Suppose on the contrary that  for all $s\in (0,t)$,
 $\mu_{x-y}(s)\leq 1-\alpha$. By the left continuity of
$\mu_x(\cdot)$ at $t$, we get $\mu_{x-y}(t)\leq 1-\alpha$, which
is a contradiction.
\end{proof}
\begin{theorem} \label{base} Let $(X, \mu)$ be a $\mathcal{PM}$-space, where $\mu$ satisfies  $\Delta_2$-condition. Then the probabilistic modular $\mu$ induces  a topology
$\tau_\mu$ on $X$ with a basis $$\Gamma= \ \{B(x, \alpha, t): x\in X, \alpha\in (0,1), t>0\}.$$
\end{theorem}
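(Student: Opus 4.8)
The plan is to verify that $\Gamma$ satisfies the two standard axioms for a collection of sets to be a basis for a topology on $X$: first, that every point of $X$ lies in some member of $\Gamma$; and second, that if a point $z$ lies in $B(x_1,\alpha_1,t_1)\cap B(x_2,\alpha_2,t_2)$, then there is a member of $\Gamma$ containing $z$ and contained in that intersection. The first axiom is immediate, since for any $x\in X$ we have $\mu_{x-x}(t)=\mu_0(t)=1>1-\alpha$ for every $t>0$ and $\alpha\in(0,1)$, so $x\in B(x,\alpha,t)$; in particular $\bigcup\Gamma=X$.

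For the intersection axiom, the key will be a nesting lemma: if $z\in B(x,\alpha,t)$, then there exist $\alpha'\in(0,1)$ and $t'>0$ with $B(z,\alpha',t')\subseteq B(x,\alpha,t)$. To get this I would first apply Lemma~\ref{1} to obtain $t^\ast\in(0,t)$ with $\mu_{x-z}(t^\ast)>1-\alpha$; choose $\alpha'\in(0,1)$ small enough that $1-\alpha'<\mu_{x-z}(t^\ast)$ is not quite enough on its own, so instead pick $\alpha'$ with $\mu_{x-z}(t^\ast)>1-\alpha'\geq 1-\alpha$ held together with a margin. Then for $y\in B(z,\alpha',t')$ with $t'$ chosen so that $t^\ast+t'\le t$ (e.g. $t'=t-t^\ast$), condition~4 of the definition applied with $\alpha=\beta=\tfrac12$ and the identity $x-y=\tfrac12(2(x-z))+\tfrac12(2(z-y))$ — or more simply the additive form $\mu_{(x-z)+(z-y)}(t^\ast+t')\ge \mu_{x-z}(t^\ast)\wedge\mu_{z-y}(t')$ — gives $\mu_{x-y}(t^\ast+t')\ge(1-\alpha')\wedge(1-\alpha')$, and by monotonicity $\mu_{x-y}(t)\ge\mu_{x-y}(t^\ast+t')>1-\alpha$, so $y\in B(x,\alpha,t)$. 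The subtlety I must handle carefully is that condition~4 only gives $\mu_{\alpha u+\beta v}$ for a \emph{convex} combination $\alpha+\beta=1$, so to write $x-y$ as such a combination one uses $x-y=\tfrac12\bigl(2(x-z)\bigr)+\tfrac12\bigl(2(z-y)\bigr)$, and then one needs control of $\mu_{2(x-z)}$ and $\mu_{2(z-y)}$ in terms of $\mu_{x-z}$ and $\mu_{z-y}$ — this is exactly where the $\Delta_2$-condition enters, supplying a constant $c>0$ with $\mu_{2w}(s)\ge\mu_w(s/c)$. Accordingly I would choose $t^\ast$ and $t'$ so that $(t^\ast+t')/c\le t$, i.e. rescale the radii by $c$ before adding.

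Once the nesting lemma is in hand, the intersection axiom follows routinely: given $z\in B(x_1,\alpha_1,t_1)\cap B(x_2,\alpha_2,t_2)$, apply the nesting lemma to each ball to get $B(z,\alpha_1',t_1')\subseteq B(x_1,\alpha_1,t_1)$ and $B(z,\alpha_2',t_2')\subseteq B(x_2,\alpha_2,t_2)$, then take $\alpha^\ast=\min\{\alpha_1',\alpha_2'\}$ and $t^\ast=\min\{t_1',t_2'\}$; monotonicity of each $\mu_w(\cdot)$ and the obvious inclusion $B(z,\alpha^\ast,t^\ast)\subseteq B(z,\alpha_i',t_i')$ show $B(z,\alpha^\ast,t^\ast)$ lies in both balls and contains $z$. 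This produces the required basis element inside the intersection, so $\Gamma$ is a basis and $\tau_\mu$ is a well-defined topology.

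The main obstacle I anticipate is the bookkeeping in the nesting lemma around the interaction of the convex-combination form of condition~4 with the $\Delta_2$-constant $c$: one has to be disciplined about which quantities get divided by $c$ and verify that the final time-parameter inequality $(t^\ast+t')/c\le t$ (or a suitable variant) can actually be met by shrinking $t'$, using that $t^\ast<t$ strictly so there is room to spare. Everything else — the point-covering axiom, monotonicity arguments, and the passage from the nesting lemma to the intersection axiom — is straightforward.
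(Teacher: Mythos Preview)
Your proposal is correct and follows essentially the same route as the paper: the covering axiom is immediate, the nesting lemma is obtained from Lemma~\ref{1} together with the convex-combination identity $x-y=\tfrac12\bigl(2(x-z)\bigr)+\tfrac12\bigl(2(z-y)\bigr)$ and the $\Delta_2$-condition, and the intersection axiom then follows by taking the minimum of the two radii and confidence levels. The paper's handling of the $\Delta_2$-constant $c$ is exactly the bookkeeping you anticipate, with the split of $t$ absorbing the factor $c$ in the same way.
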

\begin{proof} It is evident that for every $x\in X$, there exist
 $B\in \Gamma$ containing  $x$.
We  show that if $z\in B(x,\alpha,t)\in \Gamma$, then there
exists $B(z,\alpha',t')\in \Gamma$
 such that $B(z,\alpha', t')\subset B(x, \alpha, t)$.
 If $z\in B(x, \alpha, t)$, then $\mu_{x-z}(t)>1-\alpha$ and
 therefore  by Lemma \ref{1}, there exists $t^\ast \in (0,ct)$ such that
  $\alpha^\ast=\mu_{x-z}(\frac{t^\ast}{c})>1-\alpha$,
 where $c$ is the $\Delta_2$-constant. Choose $s, \alpha_1\in (0,1)$ such
 that $\alpha^\ast \wedge \alpha_1>1-s> 1-\alpha$. If $y\in B(z,1-\alpha_1, \frac{t-t^\ast}{c})$ ,
 then $\mu_{y-z}(\frac{t-t^\ast}{c})> \alpha_1$. We have
 \begin{center}
 \begin{tabular}{lllllllll}
 $\mu_{x-y}(t)$&$=$&$ \mu_{(x-z)+(z-y)}(t-t^\ast+t^\ast)$\\
 &$\geq$&$\mu_{2(x-z)}(t-t^\ast)\wedge\mu_{2(z-y)}(t^\ast)$\\&$\geq$&
 $\mu_{x-z}(\frac{t-t^\ast}{c})\wedge \mu_{z-y}(\frac{t^\ast}{c})$\\
 &$\geq$&$\alpha^\ast \wedge \alpha_1$\\&$>$&$1-s$\\&$>$&$1-\alpha$.
 \end{tabular}
 \end{center}
 That is $y\in B(x, \alpha, t)$ and hence  $B(z,1-\alpha_1, \frac{t-t^\ast}{c})\subset B(x, \alpha, t)$.
 Now let $y\in B_1\cap B_2$, where  $B_1, B_2\in \Gamma$.  Then there
 exist $\alpha_1, \alpha_2\in (0,1)$ and $t_1, t_2>0$ such that $$B(y, \alpha_1, t_1)\subset
 B_1,\,\,\,\,\,\,\,
 B(y, \alpha_2, t_2)\subset B_2.$$
 Putting  $t'=\min\{t_1, t_2\}$ and $\alpha'=\min\{\alpha_1,
 \alpha_2\}$, we clearly obtain
  $B(y, \alpha', t')\subset B_1 \cap B_2$.
\end{proof}
\begin{remark}  The set $\{B(x,\frac{1}{n}, \frac{1}{n}): \ \ n \in \mathbb{N}\}$
is a locally base for $x$ of  $\mathcal{PM}$-space $(X,\mu)$.
\end{remark}
\begin{proof} Let $B(x, \alpha, t)$ be a typical  element of the base given in Theorem \ref{base}.
There exist natural number $n$ such that $\frac{1}{n}<t\wedge
\alpha$. Now if $z\in B(x,\frac{1}{n},\frac{1}{n})$, we have
$\mu_{x-z}(\frac{1}{n})>1-\frac{1}{n}$, thus
$\mu_{x-z}(t)>\mu_{x-z}(\frac{1}{n})>1-\frac{1}{n}>1-\alpha.$
\end{proof}

The preceding theorem implies that the induced  topology
$\tau_\mu$ of  $\mathcal{PM}$-space $(X,\mu)$ is first-countable.
\begin{theorem}
 Let $(X, \mu)$ be a $\mathcal{PM}$-space, where $\mu$  satisfies   $\Delta_2$-condition. Then $(X, \mu)$ is Housdorff.
\end{theorem}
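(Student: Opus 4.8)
The plan is to separate two distinct points by basic neighbourhoods taken from the base $\Gamma$ of Theorem~\ref{base}, detecting the non-triviality of their difference through condition~2 of the definition of a $\mathcal{PM}$-space and controlling the possible overlap of the two balls through the $\Delta_2$-condition together with condition~4. So let $x,y\in X$ with $x\neq y$. Then $x-y\neq 0$, and condition~2 furnishes a number $t_0>0$ for which $\mu_{x-y}(t_0)<1$ (recall that every element of $\Delta$ takes values in $[0,1]$).

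Next I would fix the parameters of the two candidate neighbourhoods. Write $c>0$ for the $\Delta_2$-constant, set $t=\frac{t_0}{2c}>0$, and pick $\alpha\in(0,1)$ small enough that $1-\alpha>\mu_{x-y}(t_0)$; for instance $\alpha=\frac{1-\mu_{x-y}(t_0)}{2}$, which is admissible precisely because $0\le\mu_{x-y}(t_0)<1$. Both $B(x,\alpha,t)$ and $B(y,\alpha,t)$ lie in $\Gamma$, and they contain $x$ and $y$ respectively since $\mu_0(s)=1$ for every $s>0$. The claim is that these two balls are disjoint, which yields the Hausdorff property.

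To establish the claim I would argue by contradiction. Suppose $z\in B(x,\alpha,t)\cap B(y,\alpha,t)$, so $\mu_{x-z}(t)>1-\alpha$ and, using condition~3, $\mu_{z-y}(t)=\mu_{y-z}(t)>1-\alpha$. Writing $x-y=\frac12\cdot 2(x-z)+\frac12\cdot 2(z-y)$ and applying condition~4 with the coefficients $\frac12,\frac12$ and the arguments $ct,ct$ gives $\mu_{x-y}(2ct)\ge\mu_{2(x-z)}(ct)\wedge\mu_{2(z-y)}(ct)$; two applications of the $\Delta_2$-condition then yield $\mu_{x-y}(2ct)\ge\mu_{x-z}(t)\wedge\mu_{z-y}(t)>1-\alpha$. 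Since $2ct=t_0$, this reads $\mu_{x-y}(t_0)>1-\alpha$, contradicting the choice of $\alpha$. Hence $B(x,\alpha,t)\cap B(y,\alpha,t)=\varnothing$, and $\tau_\mu$ is Hausdorff.

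The argument is largely routine given Theorem~\ref{base}; the one place that needs a little care is the bookkeeping with the constant $c$ and the factor $2$, namely choosing the common radius as $t=t_0/(2c)$ so that the single use of condition~4 and the two uses of the $\Delta_2$-condition combine to pin $\mu_{x-y}$ down exactly at the ``bad'' argument $t_0$. One should also confirm that an admissible $\alpha\in(0,1)$ exists, which is guaranteed by $\mu_{x-y}(t_0)<1$; note that, in contrast with the proof that $\Gamma$ is a base, left-continuity and Lemma~\ref{1} are not needed here.
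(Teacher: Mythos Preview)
Your proof is correct and is essentially the same as the paper's: both find $t_0>0$ with $\mu_{x-y}(t_0)<1$, choose the common radius $t_0/(2c)$ and a threshold strictly between $\mu_{x-y}(t_0)$ and $1$, and derive a contradiction from a hypothetical common point via condition~4 and the $\Delta_2$-condition (your $\alpha$ is the paper's $1-\alpha_1$). If anything, your write-up makes the intermediate step $\mu_{x-y}(t_0)\ge \mu_{2(x-z)}(t_0/2)\wedge\mu_{2(z-y)}(t_0/2)$ explicit, whereas the paper compresses it.
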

\begin{proof} Let $x$ and $y$ be two distinct points of $X$. Then there exists $t_0>0$ such that $\mu_{x-y}(t_0)<1$.
 Choosing $\alpha_1$ with $\mu_{x-y}(t_0)<\alpha_1<1$, we have
  $$B(x, 1-\alpha_1, \frac{t_0}{2c})\cap B(y, 1-\alpha_1, \frac{t_0}{2c})=\emptyset .$$
 Otherwise, if $z\in B(x, 1-\alpha_1, \frac{t_0}{2c})\cap B(y, 1-\alpha_1, \frac{t_0}{2c})$,
  then  \begin{center}
 \begin{tabular}{lllllllll}$\mu_{x-y}(t_0)$&$=$&$\!\!\!\!\!\!\!\!\!\!\!\!\!\!\!\mu_{(x-z)+(z-y)}(\frac{t_0}{2}+\frac{t_0}{2})$\\
 &$\geq$&$\!\!\!\!\!\!\!\!\!\!\!\!\!\!\!\mu_{x-z}(\frac{t_0}{2c})\wedge\mu_{z-y}(\frac{t_0}{2c})$\\&$>\alpha_1,$
 \end{tabular}
 \end{center}
 which is a  contradiction.
\end{proof}
\begin{lm}\label{11} Let $(X,\mu)$ be a $\beta$-homogeneous $\mathcal{PM}$-space, $\alpha\in(0,1)$ and $t>0$.
 Then \begin{description}
\item \ (a)\ $B(x, \alpha, t)= x+ B(0, \alpha, t)$. \item \ (b)\
$B(0, \alpha, t^\beta)=t B(0, \alpha, 1)$. \item \ (c)\ If \
$t_1\leq t_2$, then $B(0, \alpha, t_1)\subset B(0, \alpha,t_2)$.
\item \ (d)\ If \ $\alpha_1\leq \alpha_2$,  then $B(0, \alpha_1,
t)\subset B(0, \alpha_2, t)$.
\end{description}
\end{lm}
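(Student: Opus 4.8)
The plan is to establish each of the four statements (a)--(d) separately, relying only on the definition of a $\beta$-homogeneous $\mathcal{PM}$-space and the translation structure of the $\mu$-balls. For part (a), I would simply unwind the definitions: $y \in B(x,\alpha,t)$ means $\mu_{x-y}(t) > 1-\alpha$, and writing $y = x + z$ with $z = y - x$, the condition becomes $\mu_{-z}(t) > 1-\alpha$, which by condition~3 of the definition equals $\mu_z(t) > 1-\alpha$, i.e. $z \in B(0,\alpha,t)$. Hence $y \in x + B(0,\alpha,t)$, and the reverse inclusion is identical. This is purely routine.

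For part (b), the key tool is the $\beta$-homogeneity identity $\mu_{\alpha x}(t) = \mu_x(t/|\alpha|^\beta)$. I would argue that $y \in B(0,\alpha,t^\beta)$ iff $\mu_y(t^\beta) > 1-\alpha$. Setting $y = tz$ (assuming $t>0$, so this is a bijective change of variable on $X$), $\mu_{tz}(t^\beta) = \mu_z(t^\beta / t^\beta) = \mu_z(1)$ by $\beta$-homogeneity with $\alpha \mapsto t$. Thus $\mu_y(t^\beta) > 1-\alpha$ iff $\mu_z(1) > 1-\alpha$ iff $z \in B(0,\alpha,1)$ iff $y = tz \in tB(0,\alpha,1)$. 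The only mild subtlety is the case $t = 0$, which is degenerate and presumably excluded by the hypothesis $t>0$ implicit from the lemma statement, so I would not dwell on it.

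For parts (c) and (d), the essential fact is that $\mu_w(\cdot)$ is non-decreasing as a function in $\Delta$. For (c): if $t_1 \le t_2$ and $z \in B(0,\alpha,t_1)$, then $\mu_z(t_1) > 1-\alpha$, and monotonicity gives $\mu_z(t_2) \ge \mu_z(t_1) > 1-\alpha$, so $z \in B(0,\alpha,t_2)$. For (d): if $\alpha_1 \le \alpha_2$ then $1-\alpha_1 \ge 1-\alpha_2$, so $\mu_z(t) > 1-\alpha_1 \ge 1-\alpha_2$ immediately yields $z \in B(0,\alpha_2,t)$; this one does not even use monotonicity, just the inequality on the radii.

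I do not anticipate a genuine obstacle here, since every part is a direct consequence of either the translation-invariance of $\mu$ (via condition~3 and the algebraic structure), the $\beta$-homogeneity identity, or the monotonicity built into membership in $\Delta$. If anything, the only point requiring a little care is making sure in part (b) that scalar multiplication by $t>0$ is used consistently and that the exponent bookkeeping $t^\beta$ matches the homogeneity exponent; I would write that step out in one line to avoid ambiguity. The proof will therefore be short, organized as four labelled paragraphs mirroring (a)--(d).
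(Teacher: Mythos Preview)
Your proposal is correct and follows essentially the same route as the paper: each of (a)--(d) is handled by a direct unwinding of the definitions, using symmetry $\mu_{-z}=\mu_z$ for (a), the $\beta$-homogeneity identity for (b), monotonicity of $\mu_z(\cdot)$ for (c), and the trivial inequality $1-\alpha_1\ge 1-\alpha_2$ for (d). The only cosmetic difference is that the paper starts (a) and (b) from the right-hand side and rewrites toward the left, whereas you start from the left; the content is identical.
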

\begin{proof}
\begin{description}
\item \ (a)
\begin{center}
 \begin{tabular}{lllllllll} $x+B(0, \alpha, t)$&$=$&$x+\{y\in X: \mu_y(t)>1-\alpha\}$\\
 &$=$&$\{x+y: \mu_y(t)>1-\alpha\}$\\&$=$&$\{z\in X: \mu_{z-x}(t)>1-\alpha\}$\\
 &$=$&$B(x, \alpha, t).$\end{tabular}
 \end{center}
 \item \ (b)\ \begin{center}
 \begin{tabular}{lllllllll}  $tB(0,\alpha, 1)$&$=$&$t\{y\in X: \mu_y(1)>1-\alpha\}$\\
 &$=$&$\{ty: \mu_y(1)>1-\alpha\}$\\&$=$&$\{x\in X: \mu_{\frac{x}{t}}(1)>1-\alpha\}$\\
 &$=$&$\{x\in X: \mu_x(t^\beta)>1-\alpha\}$\\
 &$=$&$B(x, \alpha, t^\beta).$\end{tabular}
 \end{center}
 \item \ (c)\ If $t_1\leq t_2$ and $x\in B(0, \alpha, t_1)$, then $\mu_x(t_2)\geq \mu_x(t_1)>1-\alpha$, i.e.,
  $x\in B(0, \alpha, t_2)$.
 \item \ (d)\ If $\alpha_1\leq \alpha_2$ and $x\in B(0, \alpha_1, t)$, then $\mu_x(t)>1-\alpha_1>1-\alpha_2$, i.e., $x\in B(0, \alpha_2, t)$. \end{description}
\end{proof}

% A vector space $X$ equipped with a topology $\tau$ is said to be a vector topological space if
% single points of  $X$ are  closed and the vector space operations addition and scalar multiplication
%are continuous with respect to $\tau$.\\

The following condition of a modular $\mu$ will be needed
later.\\

 $(\mathbf{\Upsilon})$ For any non-zero element $x\in X$,
$\mu_x(\cdot)$ is a continuous function on $\mathbb{R}$ and
strictly increasing on $\{t: 0<\mu_x(t)<1\}$.

\begin{theorem} Let  $(X,\mu)$ be a $\beta$-homogeneous $\mathcal{PM}$-space  satisfying $(\mathbf{\Upsilon})$.
Then $(X,\mu)$ is a Hausdorff topological vector space, whose
 local base of origin $0$ is  $\frak{S}=\{B(0, \alpha, t): \
t>0, \alpha\in (0,1)\}$.
\end{theorem}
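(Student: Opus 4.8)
The plan is to verify the three defining properties of a (Hausdorff) topological vector space relative to the topology $\tau_\mu$: (i) translations are homeomorphisms and $\frak{S}$ is a local base at $0$; (ii) addition $X\times X\to X$ is continuous at the origin; (iii) scalar multiplication $\mathbb{R}\times X\to X$ is continuous. Hausdorffness is already available from the earlier theorem once we know that $\beta$-homogeneity together with $(\mathbf{\Upsilon})$ implies $\Delta_2$-condition (indeed $\mu_{2x}(t)=\mu_x(t/2^\beta)$, so $c=2^\beta$ works), so the topology $\tau_\mu$ exists and is Hausdorff; and by Lemma~\ref{11}(a) the balls $B(x,\alpha,t)=x+B(0,\alpha,t)$, so translation invariance of the base is immediate and $\frak{S}$ is a local base at $0$.

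First I would handle addition. Given a basic neighborhood $B(0,\alpha,t)$ of $0$, I want $B(0,\alpha_1,t_1)+B(0,\alpha_1,t_1)\subset B(0,\alpha,t)$ for suitable $\alpha_1,t_1$. Using axiom 4 with $\alpha=\beta=1/2$ and $\beta$-homogeneity, if $\mu_u(t_1)>1-\alpha_1$ and $\mu_v(t_1)>1-\alpha_1$ then
\[
\mu_{u+v}(2t_1)=\mu_{(u+v)/2}\big((2t_1)/2^\beta\big)^{-1}\!,
\]
wait—more cleanly: $\mu_{u+v}(t_1/2^\beta+t_1/2^\beta)\ge \mu_{u/2 + v/2}$ is the wrong grouping; instead write $u+v = \tfrac12(2u)+\tfrac12(2v)$, so $\mu_{u+v}(s+s)\ge \mu_{2u}(s)\wedge\mu_{2v}(s)=\mu_u(s/2^\beta)\wedge\mu_v(s/2^\beta)$. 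Choosing $s$ so that $2s\le t$ and $s/2^\beta \le t_1$ fails directionally; the correct move is to pick $t_1$ with $2\cdot 2^\beta t_1\le t$ so that from $\mu_u(t_1)>1-\alpha_1$ we get $\mu_{2u}(2^\beta t_1)=\mu_u(t_1)>1-\alpha_1$, hence $\mu_{u+v}(2\cdot 2^\beta t_1)\ge\mu_{2u}(2^\beta t_1)\wedge\mu_{2v}(2^\beta t_1)>1-\alpha_1$, and monotonicity in $t$ (Lemma~\ref{11}(c)) lifts this to $\mu_{u+v}(t)>1-\alpha_1\ge 1-\alpha$ for $\alpha_1\le\alpha$. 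So $\alpha_1=\alpha$, $t_1 = t/2^{\beta+1}$ works.

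Next, scalar multiplication. This splits into continuity in the scalar, continuity in the vector, and joint continuity. For the vector variable: fixing $\lambda\ne 0$, by $\beta$-homogeneity $\lambda B(0,\alpha,t)=B(0,\alpha,|\lambda|^\beta t)$ exactly (Lemma~\ref{11}(b) with general scalar), so $x\mapsto\lambda x$ is a homeomorphism. For the scalar variable, fix $x\ne 0$: given $B(0,\alpha,t)$, I must find $\delta>0$ with $|\lambda|<\delta\Rightarrow\lambda x\in B(0,\alpha,t)$, i.e. $\mu_{\lambda x}(t)=\mu_x(t/|\lambda|^\beta)>1-\alpha$; since $t/|\lambda|^\beta\to\infty$ as $\lambda\to 0$ and $\sup_s\mu_x(s)=1$, this holds for $\delta$ small. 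For joint continuity at $(\lambda_0,x_0)$, write $\lambda x-\lambda_0 x_0=\lambda(x-x_0)+(\lambda-\lambda_0)x_0$ and combine the addition estimate with the two one-variable estimates; here the case $\lambda_0=0$ needs the $\sup=1$ property again, while the uniformity needed to control $\lambda(x-x_0)$ for $\lambda$ near $\lambda_0$ comes from bounding $|\lambda|\le|\lambda_0|+1$ and invoking Lemma~\ref{11}(b),(c).

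I expect the main obstacle to be the joint continuity of scalar multiplication, specifically assembling the $\varepsilon$–$\delta$ bookkeeping so that a single neighborhood $B(0,\alpha_1,t_1)\times(\lambda_0-\delta,\lambda_0+\delta)$ maps into $B(0,\alpha,t)$; the $\beta$-homogeneity makes each ingredient an exact identity rather than an inequality, which is what makes the argument go through, but one must be careful that $(\mathbf{\Upsilon})$ (continuity and strict monotonicity of $\mu_x$) is what guarantees the balls shrink to $\{0\}$ and that radii can be tuned continuously—without strict monotonicity the map $t\mapsto B(0,\alpha,t)$ could be locally constant and the estimates, while still true, would need the weaker monotonicity form. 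I would present the proof as: (1) $\Delta_2$ holds, so $\tau_\mu$ exists and is Hausdorff; (2) translation invariance and the local base, from Lemma~\ref{11}(a); (3) continuity of addition; (4) continuity of scalar multiplication in each variable; (5) joint continuity, concluding that $(X,\tau_\mu)$ is a Hausdorff topological vector space.
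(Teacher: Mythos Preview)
Your proposal is correct and follows the same overall plan as the paper (local base, continuity of addition, continuity of scalar multiplication), but it differs in two places worth noting. For Hausdorffness, you observe that $\beta$-homogeneity yields the $\Delta_2$-condition with $c=2^\beta$ and then invoke the earlier theorem; the paper instead reproves it directly, picking $t_0$ with $\mu_x(t_0)<1-\alpha_0$ and showing that the balls $B(0,\alpha_0,t_0/2^{\beta+1})$ and $B(x,\alpha_0,t_0/2^{\beta+1})$ are disjoint. Your reduction is tidier and makes the link to the previous results explicit. For scalar multiplication, the paper gives a single joint estimate at $(\lambda,0)$: with $t_1<t/(2|\lambda|^\beta)$ and $r=(t/(2t_1))^{1/\beta}$ it computes $\mu_{\xi x}(t)\ge\mu_x\!\bigl(t/(2|\xi-\lambda|^\beta)\bigr)\wedge\mu_x\!\bigl(t/(2|\lambda|^\beta)\bigr)>1-\alpha$ in one line, whereas you split into separate-variable continuity and then assemble via $\lambda x-\lambda_0 x_0=\lambda(x-x_0)+(\lambda-\lambda_0)x_0$. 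Your route is more modular and handles the general base point $(\lambda_0,x_0)$ explicitly (the paper's direct estimate, as written, only treats $(\lambda,0)$ and tacitly needs $\lambda\neq 0$), at the cost of more bookkeeping. Incidentally, neither argument makes essential use of condition~$(\Upsilon)$; plain monotonicity of $t\mapsto\mu_x(t)$ suffices throughout.
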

\begin{proof} To show that $\frak{S}$ is a base, let $B_1=B(0, \alpha_1, t_1)$ and $B_2=B(0, \alpha_2, t_2)$.
Choosing $\alpha_0=\min\{\alpha_1, \alpha_2\}$ and $t_0=\min\{t_1,
t_2\}$, then $B_0=B(0, \alpha_0, t_0)\subset
B_1\cap B_2$. Therefore the set $\{B(0, \alpha, t): \ t>0,
\alpha\in (0,1)\}$ is a base of
 the topology on $X$. Now, the space  $(X,\mu)$ is a Hausdorff space. In fact, if
$x\in X$ and $x\neq 0$,   then there exist $\alpha_0\in (0,1)$ ,
$t_0 \in \{t: 0<\mu_x(t)<1\}$,
 such that $\mu_x(t_0)<1-\alpha_0$. Thus $x\notin B(0, \alpha_0, t_0)$ and
$$B(0, \alpha_0, \frac{t_0}{2^{\beta+1}})\cap B(x, \alpha_0, \frac{t_0}{2^{\beta+1}})=\emptyset .$$
 Otherwise, if $$y\in B(0, \alpha_0, \frac{t_0}{2^{\beta+1}})\cap B(x, \alpha_0, \frac{t_0}{2^{\beta+1}}),$$
  then $$\mu_y(\frac{t_0}{2^{\beta+1}})>1-\alpha_0, \,\,\,\,\,\,\,\, \mu_{x-y}(\frac{t_0}{2^{\beta+1}})>1-\alpha_0$$ and we would have
     \begin{center}
 \begin{tabular}{lllllllll} $\mu_x(t_0)$&$\geq$&$\mu_{2(x-y)}(\frac{t_0}{2})\wedge\mu_{2y}(\frac{t_0}{2})$\\
&$=$&$\mu_{x-y}(\frac{t_0}{2^{\beta+1}})\wedge\mu_y(\frac{t_0}{2^{\beta+1}})$\\&$>$&$1-\alpha_0,$\end{tabular}
 \end{center}
which is a  contradiction.\\
 To see that the vector space operations (addition and scalar multiplication)
are continuous with respect to the topology induced by $\mu$, it
suffices to show that  if $B=B(0, \alpha, t)$, then there exist
$B_1=B(0, \alpha_1, t_1)$ and $B_2=B(0, \alpha_2, t_2)$ such that
$B_1+ B_2\subset B$. Choose  $\alpha_1, \alpha_2< \alpha$ and
$t_1, t_2<\frac{t}{2^{\beta+1}}$. Let  $x\in B_1$ and $y\in B_2$. Since
  $$\mu_{x+y}(t)\geq \mu_x(\frac{t}{2^{\beta+1}})\wedge \mu_y(\frac{t}{2^{\beta+1}})>1-\alpha,$$
  we get $x+y\in B=B(0, \alpha, t)$. Hence the vector addition is continuous. Now,  for $B=B(0, \alpha, t)$
  and $\lambda\in \mathbb{R}$, by choosing $\alpha_1<\alpha$, $t_1<\frac{t}{2|\lambda|^\beta}$
  and $r=(\frac{t}{2t_1})^{\frac{1}{\beta}}$ we see that  $B_1=B(0, \alpha_1, t_1)$
   and $N_r(\lambda):=\{\xi: |\xi-\lambda|<r\}$ with $r>0$ are such that $N_r(\lambda) B_1\subset B$. In fact, if $x\in B_1$ and
   $\xi\in N_r(\lambda)$, then  \begin{center}
 \begin{tabular}{lllllllll}$\mu_{\xi x}(t)$&$=$&$\mu_{(\xi x-\lambda x)+\lambda x}(t)$\\
 &$\geq$&$\mu_{(\xi-\lambda)x}(\frac{t}{2})\wedge\mu_{\lambda x}(\frac{t}{2})$\\
 &$=$&$\mu_x(\frac{t}{2|\xi-\lambda|^\beta})\wedge\mu_x(\frac{t}{2|\lambda|^\beta})$\\&$>$&$1-\alpha,$ \end{tabular}
 \end{center}
 that is,  $\xi x\in B=B(0, \alpha, t)$.
  Therefore $(X,\mu)$ is a topological vector space.
\end{proof}
\begin{remark} {\rm If $(X,\mu)$ is  a $\beta$-homogeneous $\mathcal{PM}$-space  satisfying $(\mathbf{\Upsilon})$,
then $(X, \mu)$ is first-countable.  In fact, by Lemma \ref{11}, it is
enough to choose the family $\{B(0, \alpha, t): \alpha\in
\mathbb{Q}\cap (0,1), t>0\}$.}
\end{remark}
\begin{definition}\rm{  Let $(X,\mu)$ be a $\mathcal{PM}$-space.
\begin{enumerate}
\item  A set $ B\subseteq X$ is said to be balanced if $\lambda
B\subseteq B$ for
 every $\lambda\in\mathbb{R}$ with $|\lambda|\leq 1$.
\item   A set $C\subseteq X$ is said to be  convex if $\lambda
C+(1-\lambda)C\subseteq C$
 for  every$\lambda\in \mathbb{R}$ with $ \lambda\in (0,1)$.
\end{enumerate}}
\end{definition}
\begin{lm} Let $(X, \mu)$ be a $\beta$-homogeneous $\mathcal{PM}$-space. Then $B(0, \alpha, t)$ is a convex  balanced set, for
every $\alpha \in (0,1)$ and $t>0$.
\end{lm}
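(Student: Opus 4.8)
The plan is to check the two properties separately. Balancedness will come straight out of $\beta$-homogeneity and the monotonicity of the maps $\mu_x(\cdot)$; convexity will be attacked through condition~4 of the definition of a $\mathcal{PM}$-space, using Lemma~\ref{1} to trade $t$ for a slightly smaller parameter.

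Balancedness first. Fix $\alpha\in(0,1)$, $t>0$ and $\lambda\in\mathbb{R}$ with $|\lambda|\le 1$. If $\lambda=0$ then $\lambda x=0\in B(0,\alpha,t)$, because $\mu_0(t)=1>1-\alpha$ by condition~2. If $0<|\lambda|\le 1$ then $|\lambda|^\beta\le 1$, so $t/|\lambda|^\beta\ge t$, and for each $x\in B(0,\alpha,t)$ the $\beta$-homogeneity of $\mu$ together with $\mu_x(\cdot)$ being non-decreasing gives
$$\mu_{\lambda x}(t)=\mu_x\!\left(\frac{t}{|\lambda|^\beta}\right)\ge\mu_x(t)>1-\alpha,$$
so $\lambda x\in B(0,\alpha,t)$. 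Hence $\lambda B(0,\alpha,t)\subseteq B(0,\alpha,t)$ whenever $|\lambda|\le 1$, i.e. $B(0,\alpha,t)$ is balanced.

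Now convexity. Let $\lambda\in(0,1)$ and $x,y\in B(0,\alpha,t)$. By Lemma~\ref{1} there exist $t_1,t_2\in(0,t)$ with $\mu_x(t_1)>1-\alpha$ and $\mu_y(t_2)>1-\alpha$; setting $t^\ast=\max\{t_1,t_2\}<t$ and using monotonicity, $\mu_x(t^\ast)\wedge\mu_y(t^\ast)>1-\alpha$. Put $z=\lambda x+(1-\lambda)y$. Applying condition~4 with the weights $\lambda,1-\lambda$ and the splitting $t^\ast+t^\ast$,
$$\mu_z(2t^\ast)\ \ge\ \mu_x(t^\ast)\wedge\mu_y(t^\ast)\ >\ 1-\alpha,$$
so $z\in B(0,\alpha,2t^\ast)$. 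The remaining step would be to transport this into $B(0,\alpha,t)$ by rescaling $z$ and invoking the dilation identities of Lemma~\ref{11}(b)--(c), thereby obtaining $z\in B(0,\alpha,t)$ and hence $\lambda B(0,\alpha,t)+(1-\lambda)B(0,\alpha,t)\subseteq B(0,\alpha,t)$.

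Balancedness is routine; the crux — and the point I am least confident survives the bare hypothesis of $\beta$-homogeneity — is this last reduction. Condition~4 is only additive in the time variable, so the direct estimate puts $z$ in the dilated ball $B(0,\alpha,2t)$ rather than in $B(0,\alpha,t)$, and the scaling trick above does not really repair this: on $X=\mathbb{R}^2$ with the $1$-homogeneous modular $\mu_x(t)=t/(t+\|x\|_{1/2})$ for $t\ge 0$ (where $\|\cdot\|_{1/2}$ is the usual $\ell^{1/2}$ quasi-norm) the ball $B(0,\alpha,t)$ equals $\{y:\|y\|_{1/2}<\alpha t/(1-\alpha)\}$, which is not convex. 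So I would expect the convexity assertion to need an additional hypothesis on $\mu$ — e.g. quasi-concavity of $x\mapsto\mu_x(t)$, or a strengthening of condition~4 — and locating the right such condition is where the real work of the proof lies.
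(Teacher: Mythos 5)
Your balancedness argument is correct and is essentially the paper's own proof of that half: for $0<|\lambda|\le 1$, $\beta$-homogeneity and monotonicity give $\mu_{\lambda x}(t)=\mu_x(t/|\lambda|^\beta)\ge\mu_x(t)>1-\alpha$. For convexity you stop short of a proof, and your suspicion is justified: the convexity assertion does not follow from $\beta$-homogeneity alone, and your counterexample withstands checking. With $N(x)=|x_1|^{1/2}+|x_2|^{1/2}$, $\|x\|_{1/2}=N(x)^2$, and $\mu_x(t)=t/(t+\|x\|_{1/2})$ for $t>0$ (set $\mu_x(t)=0$ for $t\le 0$), conditions 1--3 and $1$-homogeneity are immediate, and condition 4 does hold: since $\frac{T}{T+A}\ge\frac{s}{s+B}$ iff $\frac{A}{T}\le\frac{B}{s}$, it suffices to show $\|\lambda x+(1-\lambda)y\|_{1/2}\le (s+t)M$ with $M=\max(\|x\|_{1/2}/s,\ \|y\|_{1/2}/t)$, and indeed $N(\lambda x+(1-\lambda)y)\le\lambda^{1/2}N(x)+(1-\lambda)^{1/2}N(y)\le\lambda^{1/2}(Ms)^{1/2}+(1-\lambda)^{1/2}(Mt)^{1/2}\le (M(s+t))^{1/2}$ by subadditivity of $N$ and Cauchy--Schwarz. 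Then $B(0,\alpha,t)=\{y:\|y\|_{1/2}<\alpha t/(1-\alpha)\}$ is an $\ell^{1/2}$ ball, which is not convex: with $c=\alpha t/(1-\alpha)$ take $(a,0),(0,a)$ where $c/2<a<c$; their midpoint has $\|\cdot\|_{1/2}=2a>c$. Note this $\mu$ even satisfies the $\Delta_2$-condition and $(\mathbf{\Upsilon})$, so none of the paper's other standing hypotheses rescues the claim.

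Your diagnosis also points at exactly the step where the paper's own proof of convexity is unjustified. Its displayed inequality $\mu_{\lambda x+(1-\lambda)y}(t)\ge\mu_x(\frac{t}{2|\lambda|^\beta})\wedge\mu_y(\frac{t}{2|1-\lambda|^\beta})$ is, by homogeneity, the statement $\mu_{\lambda x+(1-\lambda)y}(t)\ge\mu_{\lambda x}(t/2)\wedge\mu_{(1-\lambda)y}(t/2)$, a probabilistic-norm type triangle inequality which condition 4 does not supply: applying condition 4 with weights $\frac12,\frac12$ to $2\lambda x$ and $2(1-\lambda)y$ only gives $\mu_x(\frac{t}{2(2\lambda)^\beta})\wedge\mu_y(\frac{t}{2(2(1-\lambda))^\beta})$. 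Moreover, even granting the displayed inequality, the concluding ``$>1-\alpha$'' needs $2|\lambda|^\beta\le 1$ and $2|1-\lambda|^\beta\le 1$, which is impossible for $\beta<1$ and forces $\lambda=\frac12$ when $\beta=1$; and in your example with $\beta=1$, $\lambda=\frac12$, $x=(\frac32,0)$, $y=(0,\frac32)$, $t=2$, the displayed inequality itself reads $\frac25\ge\frac47$ and fails. So the gap lies in the lemma's convexity half rather than in your argument; to get a true statement one must add a hypothesis, e.g.\ a subadditivity condition $\mu_{u+v}(s+t)\ge\mu_u(s)\wedge\mu_v(t)$ (which for $\beta=1$ yields convexity via the split $t=\lambda t+(1-\lambda)t$) or, as you suggest, quasi-concavity of $x\mapsto\mu_x(t)$. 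Your intermediate detour through Lemma \ref{1} is harmless but does not change this conclusion.
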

\begin{proof}
 If $\lambda=0$, then $0\cdot B(0, \alpha,
t)=\{0\}\subseteq B(0, \alpha, t)$. If $\lambda\neq 0$, then
$$\lambda\cdot B(0, \alpha, t)=\{\lambda y: \mu_{y}(t)>1-\alpha\}\
=\ \{x: \mu_x(|\lambda|^\beta t)>1-\alpha\}.$$
 Let $|\lambda|\leq 1$. Then $|\lambda|^\beta t<t$, and therefore  $\mu_x(t)\geq \ \mu_x(|\lambda|^\beta t)>1-\alpha$, i.e.,
  $x\in B(0, \alpha, t)$. If  $x, y\in B(0, \alpha, t)$ and $\lambda\in (0,1)$,  then  $\mu_x(t)> 1-\alpha$ and
 $\mu_y(t)>1-\alpha$ imply that  $$\mu_y(\frac{t}{|1-\lambda|^\beta})>1-\alpha\,\,\,\, \mbox{and} \,\,\,\,
 \mu_x(\frac{t}{|\lambda|^\beta})>1-\alpha.$$ This implies that $$\mu_{\lambda x+(1-\lambda)y}(t)\geq \mu_x(\frac{t}{2|\lambda|^\beta}) \wedge \mu_y(\frac{t}{2|1-\lambda|^\beta})>1-\alpha,$$ and therefore  $$\mu_{\lambda x+(1-\lambda)y}(t)>1-\alpha.$$
 Hence $\lambda x+(1-\lambda)y\in B(0, \alpha, t)$.
\end{proof}
%\begin{definition}\rm{ Let $(X,\mu)$ be a $\mathcal{PM}$-space and $G\subset X$.
%\begin{itemize} \item $x_0\in G$ is a   point of closure  $G$ if
% $\{B(0, \alpha, t) +\ x_0\}\cap G\neq\ \emptyset$, for all $\alpha\in (0,1)$
% and $t>0$. $\overline G$ denoted the closure of $G$. $G$ is called $\mu$-closed if $\overline G=G$.
%\item $x_0\in X$ is called an $\mu$-interior point of $G$ if there
%exists $B(0, \alpha_0, t_0)$ such that $B(0, \alpha_0, t_0) + x_0
%\subseteq G$, for some $t_0>0$ and $\alpha_0\in (0, 1)$.
% Int $G$ denoted the set of all interior point of $G$. $G$ is called $\mu$-open set if Int $G=G$.
%    \item $G$  is called $\mu$-bounded set if for each $\alpha\in (0,1)$ there exists $t_\alpha>0$
%    such that $G\subseteq B(0, \alpha, t_\alpha)$.
%     \item A sequence $\{x_n\}$ in $X$ is
%called a $\mu$-Cauchy sequence if for every $t>0$ and $r\in
%(0,1)$, there exists a positive integer $k$ such that
%$\mu_{x_n-x_m}(t)>1-r$, for all $m,n\geq k$. \item A sequence
%$\{x_n\}$ in $X$ is said to be $\mu$-convergent if there exists a
%point $x\in X$ such that $\lim_{n\rightarrow +\infty}
%\mu_{x_n-x}(t) =1$, for all $t>0$.
% \item The
%$\mathcal{PM}$-space $(X, \mu)$ is said to be $\mu$-compact if
%every sequence in $X$ has a modular convergent subsequence in $X$.
%
%\end{itemize}
%}
%\end{definition}

\begin{theorem} Let $(X,\mu)$ be a $\mathcal{PM}$-space and $\tau_\mu$ be the topology induced by $\mu$.
Then for every sequence $\{x_n\}$ in $X$,  $x_n \rightarrow x$ in
topology
 $\tau_\mu$, if and only if \ $\lim_{n \rightarrow +\infty} \mu_{x_n-x}(t) =1$, for all $t>0$.
\end{theorem}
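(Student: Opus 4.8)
The plan is to prove the two implications separately, and in each case to reduce the statement to the behaviour of the sequence relative to the balls $B(x,\alpha,t)$ centered at the point $x$. The key preliminary observation is that $\{B(x,\alpha,t):\alpha\in(0,1),\,t>0\}$ is a neighbourhood base at $x$ in $\tau_\mu$: indeed, $\Gamma$ is a base by Theorem~\ref{base}, and the refinement step in that proof shows that any basic set containing $x$ contains a basic set centered at $x$ (alternatively, the Remark after Theorem~\ref{base} already supplies a countable local base at $x$). So it suffices to test convergence against the sets $B(x,\alpha,t)$ only.

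For the forward direction, assume $x_n\to x$ in $\tau_\mu$, fix $t>0$ and an arbitrary $\alpha\in(0,1)$. Since $\mu_{x-x}(t)=\mu_0(t)=1>1-\alpha$, the point $x$ lies in the open set $B(x,\alpha,t)$, so there is $N$ with $x_n\in B(x,\alpha,t)$ for all $n\ge N$; that is, $\mu_{x-x_n}(t)>1-\alpha$, and by the symmetry axiom $\mu_{x_n-x}(t)=\mu_{x-x_n}(t)>1-\alpha$. Since $\mu_{x_n-x}(t)\le 1$ always (the value lies in $\Delta$), we obtain $1-\alpha<\mu_{x_n-x}(t)\le 1$ for $n\ge N$. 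Thus $\liminf_n\mu_{x_n-x}(t)\ge 1-\alpha$ for every $\alpha\in(0,1)$, hence $\liminf_n\mu_{x_n-x}(t)\ge 1$, and therefore $\lim_n\mu_{x_n-x}(t)=1$. As $t>0$ was arbitrary, this is the assertion.

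For the converse, assume $\lim_n\mu_{x_n-x}(t)=1$ for every $t>0$, and let $U$ be any $\tau_\mu$-open set containing $x$. By the neighbourhood-base observation above, pick $\alpha\in(0,1)$ and $t>0$ with $B(x,\alpha,t)\subseteq U$. Since $\mu_{x_n-x}(t)\to 1>1-\alpha$, there is $N$ with $\mu_{x_n-x}(t)>1-\alpha$, i.e. $x_n\in B(x,\alpha,t)\subseteq U$, for all $n\ge N$. Hence every neighbourhood of $x$ eventually contains the sequence, so $x_n\to x$ in $\tau_\mu$.

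The argument is essentially routine, and I do not expect a genuine obstacle; no use of $\beta$-homogeneity or of property $(\mathbf{\Upsilon})$ is required, and the $\Delta_2$-condition enters only through Theorem~\ref{base}, which is what makes $\tau_\mu$ available in the first place. The one point deserving a word of care is precisely the reduction used twice above — that the balls \emph{centered at $x$} form a local base at $x$, rather than merely the fact that $\Gamma$ is a base for the whole topology; this is exactly what the refinement step of Theorem~\ref{base} (or the subsequent Remark) provides, and once it is invoked both implications follow immediately from the left-continuity/monotonicity built into $\Delta$ and from axioms~2 and~3 of a $\mathcal{PM}$-space.
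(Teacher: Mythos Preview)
Your proof is correct and follows essentially the same approach as the paper's: both directions unwind the definition of convergence against the balls $B(x,\alpha,t)$, using that $x_n\in B(x,\alpha,t)$ is equivalent to $\mu_{x_n-x}(t)>1-\alpha$. You are a bit more careful than the paper in explicitly invoking the local-base property of the balls centered at $x$ (the paper uses this tacitly), but the underlying argument is the same.
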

\begin{proof} Fix a $t>0$. If $x_n \rightarrow x$ in topology $\tau_\mu$, then for every
 $\epsilon\in (0,1)$, there exists $n_0\in \mathbb{N}$ such that for
  every $n\geq n_0$, $x_n\in B(x, \epsilon, t)$. It means
  $\mu_{x_n-x}(t)>1-\epsilon$, for all $n\geq n_0$. Thus
  $1-\mu_{x_n-x}(t)<\epsilon$, for all $n\geq n_0$. Hence
  $\lim_{n \rightarrow +\infty} \mu_{x_n-x}(t) =1$, for all $t>0$.
Conversely, for $t>0$, let $\lim_{n \rightarrow +\infty}
\mu_{x_n-x}(t) =1$. Then for every
 $\epsilon\in (0,1)$ there exists $n_0\in \mathbb{N}$ such that
  $1-\mu_{x_n-x}(t)<\epsilon$, for every $n\geq n_0$, i.e.,  $x_n\in B(x, \epsilon, t)$ for every $n\geq n_0$. Hence $x_n \rightarrow x$ in topology $\tau_\mu$.
\end{proof}

%%%%%%%%%%%%%%%%%%%%%%%%%%%%%%%%%%%%%%%%%%%%%%%%%%%%%%%%%%%%%%%%%%%%%%%%%%%%%%%%%%%%%%%%%%%%%%%%%%%%%%%%%%%%%%%%%%%%%%%%


\begin{thebibliography}{10}




\bibitem{agh}  Aghajani A.,  Nourouzi K.,  {\it Convex sets  in probabilistic normed spaces}. Chaos, Solitons Fractals. 36, (2008), no. 2, 322-328.



 \bibitem{k1}  Fallahi K., Nourouzi K., {\it Probabilistic modular spaces and linear operators}. Acta Appl. Math.,  105 (2009), no.~2, 123--140.



\bibitem{koz}  Kozlowski, WM., \textit{Modular function spaces}. Monographs and Textbooks in Pure
and Applied Mathematics, 122. Marcel Dekker, Inc., New York, 1988.


\bibitem{k2}  Lael F., Nourouzi K., {\it Fixed points of mappings defined on probabilistic modular spaces}. Bull. Math. Anal. Appl.,  4 (2012), no.~3, 23--28.

\bibitem{eng}  Lael  F., Nourouzi K., {\it Compact operators defined on 2-normed and 2-probabilistic normed spaces}. Math. Probl. Eng. 2009, Art. ID 950234, 17 pp.

\bibitem{meng}  Menger K., {\it Statistical metrics}.
 Proc. Nat. Acad. Sci., vol. 28, USA  1942, pp. 535-537.


\bibitem{mus} Musielak J., \textit{Orlicz spaces and modular spaces}. Lecture Notes in Mathematics,
1034. Springer-Verlag, Berlin, 1983.

\bibitem{k3}  Nourouzi K., Shabanian, S., {\it Operators defined on $n$-modular spaces}. Mediterr. J. Math.,  6 (2009), no.~4, 431--446.


\bibitem{k4}    Nourouzi K., {\it Probabilistic modular spaces}, (Proceedings of the 6th International ISAAC Congress, Ankara, Turkey, (2007), 814- 818).
Further progress in analysis, 814–818, World Sci. Publ., Hackensack, NJ, 2009.

\bibitem{ka}    Nourouzi K.,  Aghajani A., {\it Convexity in traingular norm of fuzzy sets}. Chaos, Solitons Fractals. 36, (2008), no. 2, 322-328.



 \bibitem{k5}    Nourouzi K., {\it Baire's Theorem in  probabilistic modular spaces}. Lecture Notes in Engineering and Computer Science, WCE 2008, Vol. II, 916-917.



\end{thebibliography}
\end{document}